\algrenewcommand\algorithmicrequire{\textbf{Input:}}
\algrenewcommand\algorithmicensure{\textbf{Output:}}
\newtheorem{theorem}{Theorem}[section]
\newtheorem{lemma}[theorem]{Lemma}
\newcommand{\R}{\mathbb{R}}
\title{Fast-and-flexible decision-making with modulatory interactions}
\author{Rodrigo Moreno-Morton$^{1}$, Anastasia Bizyaeva$^2$, Naomi Ehrich Leonard$^3$, Alessio Franci$^{4}$%
\thanks{$^1$Department of Electrical Engineering and Computer Science, University of Liege, Belgium, {\tt\small luiromormor@gmail.com}}%
\thanks{$^2$NSF AI Institute in Dynamic Systems and Department of Mechanical Engineering at the University of Washington, Seattle, WA, 98195 USA, {\tt\small anabiz@uw.edu}}%
\thanks{$^3$Department of Mechanical and Aerospace Engineering, Princeton University, Princeton, NJ 08544 USA, {\tt\small naomi@princeton.edu}}%
\thanks{$^4$Department of Electrical Engineering and Computer Science, University of Liege, Belgium, and WEL Research Institute, Wavre, Beligum, {\tt\small afranci@uliege.be}}%
}
\begin{document}



\maketitle

\begin{abstract}

    Multi-agent systems in biology, society, and engineering are capable of making decisions through the dynamic interaction of their elements. Nonlinearity of the interactions is key for the speed, robustness, and flexibility of multi-agent decision-making. In this work we introduce modulatory, that is, multiplicative, in contrast to additive, interactions in a nonlinear opinion dynamics model of fast-and-flexible decision-making. The original model is nonlinear because network interactions, although additive, are saturated. Modulatory interactions introduce an extra source of nonlinearity that greatly enriches the model decision-making behavior in a mathematically tractable way.
    Modulatory interactions are widespread in both biological and social decision-making networks; our model provides new tools to understand the role of these interactions in networked decision-making and to engineer them in artificial systems. 
    
\end{abstract}

\section{Introduction}


We recently introduced a general, mathematically tractable, nonlinear opinion dynamics (NOD) model of fast-and-flexible multi-agent, multi-option decision-making~\cite{bizyaeva2022nonlinear,leonard2024fast,bizyaeva2023multi}. The model consists of a network of linear, first-order, stable dynamics with saturated network interactions and exogenous inputs. It is closely related to both bio-inspired~\cite{wilson1972excitatory} and artificial~\cite{hopfield1984neurons} recurrent neural networks models. It is also reminiscent of continuous-time models of gene regulatory networks~\cite{le2015quantitative} with linear degradation dynamics and saturated Hill-type~\cite{hill1910possible} molecular interactions. It has been used to model and analyze biological~\cite{gray2018multiagent} and sociopolitical~\cite{leonard2021nonlinear,bizyaeva2023multi} collective decision-making as well as to engineer a number of decision-making behaviors~\cite{amorim2023threshold,franci2021analysis,cathcart2023proactive,hu2023emergent,park2021tuning}.

In our NOD, the linear stable dynamics model negative feedback regulation of opinions towards a neutral, non-opinionated state. Saturated network interactions model nonlinear opinion exchanges that can amplify the information brought by exogenous inputs through positive feedback and trigger fast and strong opinion formation. The balance between negative and positive feedback is tuned by an {\it attention} parameter that models the agents' level of engagement in the decision-making process. When positive and negative feedback are perfectly balanced, the model undergoes a bifurcation at which the non-opinionated state becomes unstable and strong opinions are formed. This opinion-forming bifurcation is the key determinant of the fast-and-flexible nature of decision-making described by the NOD model~\cite{leonard2024fast}. 

In the original NOD model~\cite{bizyaeva2022nonlinear,leonard2024fast,bizyaeva2023multi} network interactions are saturated but otherwise {\it additive}. We provide ample evidence in Section~\ref{sec: modulation} that neural, biological, and sociopolitical decision-making networks also use {\it modulatory}, that is, {\it multiplicative}, interactions to dynamically modulate the strength of the additive network links in a distributed way. The NOD model in~\cite{bizyaeva2022nonlinear,leonard2024fast,bizyaeva2023multi} cannot systematically represent the effect of modulatory interactions.
Here, we extend this NOD model to include modulatory network interactions inspired by modulatory interactions observed in nature and society. We analyze how opinion-forming bifurcations are shaped by these interactions, and explore their use to engineer more complex decision-making behavior in autonomous agents in a mathematically tractable way.

The paper contributions are the following. {\it i)} We define a mathematically tractable parametrization of modulatory interactions in the NOD model introduced in~\cite{bizyaeva2022nonlinear,leonard2024fast,bizyaeva2023multi}. {\it ii)} We analyze how multiplicative interactions shape the model opinion-forming bifurcation behavior and we provide a thorough interpretion of the analytical results in terms of fast-and-flexible decision-making. {\it iii)} We use modulatory interactions to augment a recently introduced NOD-based robotic obstacle avoidance controller~\cite{cathcart2023proactive} and provide it with fast-and-flexible {\it conditional} decision-making capabilities.


The paper is organized as follows. 
In Section~\ref{sec: modulation} we review evidence of modulatory interactions in nature and society, and discuss their potential in engineered systems. In Section~\ref{sec: general model definitions} we define the general nonlinear opinion dynamics model with modulatory interactions and discuss its interpretation and possible generalizations. In Section~\ref{sec: bidimensional} we illustrate the main ideas and results of the paper on a representative example. Section~\ref{sec: analysis} presents the main analytical results of the paper and thoroughly discusses their consequence for the mathematical tractability of the model. Section~\ref{sec: shaping hierarchical} applies the main results of the paper to analyze how modulatory interactions shape opinion formation in a multi-agent two-option NOD model of modulatory social influence and in a single-agent multi-option NOD model of robotic navigation.

\section{Modulatory interactions in biological, social, and artificial decision-making networks}
\label{sec: modulation}

We describe several examples of the occurrence and relevance of modulatory interactions in biological and social system, and propose their implementation in artificial systems by suitably augmenting NOD.\\
\underline{Neural Networks.}
Recent evidence~\cite{boahen2022dendrocentric,murphy2016global,silver2010neuronal} suggests that multiplicative interactions between neurons (Figure~\ref{fig: modulation}, top left), implemented through nonlinear dendrites and neuromodulation, play a key role in many kinds of neural computation.
A fundamental building block of artificial neural networks, the Gated Recurrent Unit (GRU), is also defined by multiplicative interactions~\cite{krishnamurthy2022theory}.

\begin{figure}
\vspace{7pt}
    \centering
    \includegraphics[width=0.4\textwidth]{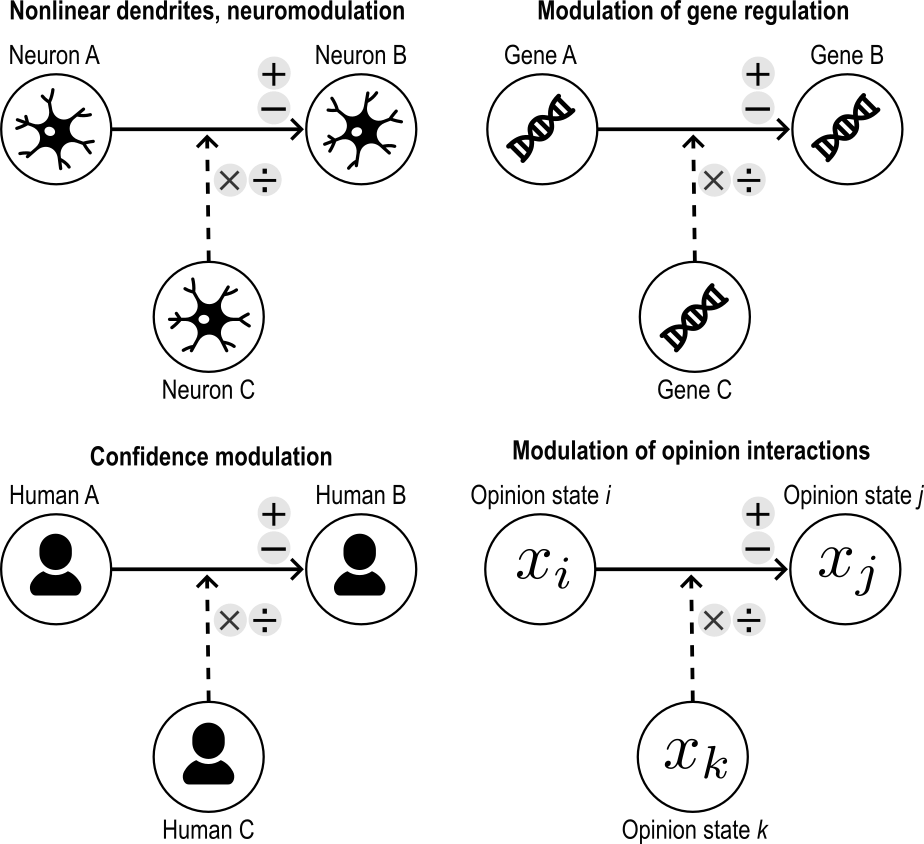}
    \caption{{\bf Top left}: Neuron A excites or inhibits Neuron B through additive synaptic inputs. Neuron C modulates the strength of this interaction through multiplicative dendritic and neuromodulatory inputs. {\bf Top right}: Gene A regulates the expression of Gene B through transcription factors. Gene C modulates the strength of this regulation. {\bf Bottom left}: Human A influences the opinion of Human B through verbal information exchange. Human C changes the strength of this influence by modulating the confidence that B has on A. {\bf Bottom right}: In nonlinear opinion dynamics, an opinion state $x_i$ has either a positive or negative effect
    on another opinion state $x_j$ through saturated additive interactions. Introducing multiplicative interactions, a third opinion state $x_k$ modulates how strong these effects are.
    }
    \label{fig: modulation}
\end{figure}

\underline{Gene Regulatory Networks.}
The modulation of the gene transcription machinery, and its effects on gene expression regulation, have been studied both in prokaryotes~\cite{iyerTranscriptionalRegulationFactor2020} and eukaryotes~\cite{sinhaHistoneModificationsRegulate2023}.
This kind of molecular modulatory interactions can naturally be modeled as multiplicative interactions in gene regulatory network models~\cite{le2015quantitative} (Figure~\ref{fig: modulation}, top right).

\underline{Social networks.} Modulatory interactions between social agents are often introduced into basic social network models, e.g., the DeGroot model~\cite{degroot1974reaching}, to reproduce and understand complex social phenomena 
(Figure~\ref{fig: modulation}, bottom left). Examples are biased assimilation models~\cite{dandekar2013biased,xia2020analysis}, bounded confidence models~\cite{brooks2022emergence}, social network models of belief polarization in Twitter discussions~\cite{baumann2020modeling}, models of asymmetric political polarization in the United States Congress \cite{leonard2021nonlinear}, epidemics model with risk aversion~\cite{bizyaeva2024active}, and war-peace transitions in neighboring nations \cite{morrison2023transitions}.

\underline{NOD for autonomous agent control.}
The nonlinear opinion dynamics in~\cite{bizyaeva2022nonlinear, leonard2024fast, bizyaeva2023multi} have recently been used for the control of autonomous agents, including self-driving cars~\cite{hu2023emergent} and robotic obstacle avoidance~\cite{cathcart2023proactive}. Inspired by the widespread occurrence of modulatory interactions in biological and social systems, in Section~\ref{sec: hierarchical decisions} we show how the NOD with modulatory interactions introduced in the next section (Figure~\ref{fig: modulation}, bottom right) provide the means to enable conditional fast-and-flexible decision-making in the NOD model proposed in~\cite{cathcart2023proactive} for robot navigation.



\section{A tractable NOD model with modulatory interactions}\label{sec: general model definitions}

The proposed NOD with modulatory interactions for a group of agents forming opinions about two options reads
\begin{align}\label{eq: general} 
    \dot{x}_i &= -x_i + b_i + S\left( \sum_{j=1}^N a_{ij} \left( u_0 + \sum_{k=1}^N m_{ijk} x_k^n \right) x_j \right),
\end{align}
$i=1,\ldots,N$, which can also be written in vector form as
\begin{equation}
    \dot{\mathbf{x}} = -\mathbf{x} + \mathbf{b} + {\bf S}( (u_0+\tilde M({\bf x}))\odot A \cdot \mathbf{x}) \label{eq: general vector}
\end{equation}
where ${\bf x}=[x_1\ \cdots x_N]^T \in \R^N$ represents the agents' opinion states, $x_i > 0$ (resp. $x_i < 0$) means a preference for option 1 (resp. option 2), $\mathbf{b}=[b_1\ \cdots\ b_N]^T \in \R^N$ are exogenous inputs, and $n \in \mathbb{N}_{>0}$ is the \textit{order} of the modulatory interaction. $A = [a_{ij}], i,j=1,\ldots,N$ is a matrix of interaction weights that determines the additive effect of the opinion of agent $j$ on the opinion of agent $i$. The term $u_0 + \tilde{M}(\mathbf{x})$
is the sum of the \textit{basal attention} $u_0$ that an agent is paying to its neighbors' opinions and the effects $\tilde M(\mathbf{x})=[\tilde m_{ij}(\mathbf{x})]$, $i,j=1,\ldots,N$, where
$
\tilde{m}_{ij}(\mathbf{x}) = \sum_{k=1}^N m_{ijk} x_k^n
$, that {\it modulatory interactions} have on the attention paid to specific neighbors. In particular, $m_{ijk}$ determines the sign and strength of the modulatory effect that the opinion of agent $k$ has on $a_{ij}$. Modulatory interactions generalize the state-dependent attention mechanism introduced in~\cite{bizyaeva2022nonlinear, bizyaeva2021control, franci2021analysis}.
Finally, $\mathbf{S}(\mathbf{x}) = [S(x_1),\cdots S(x_N)]$, with $S: \R \rightarrow \R$, is a vector of smooth saturation functions. To model symmetry between option 1 and option 2 in the absence of inputs, it is natural to assume that $S$ is odd-symmetric. Here, we simply assume $S(\cdot) = \tanh(\cdot)$.

Alternatively, model~\eqref{eq: general} can be interpreted as a modulated NOD for a single agent forming opinions about $N$ options. In this case, $x_i$ is the agent's opinion about option $i$, where $x_i>0$ ($x_i<0$) means that the agent favors (disfavors) option $i$, $a_{ij}$ is the weight with which the agent's opinion about option $i$ additively affects its opinion on option $j$, and $m_{ijk}$ is the weight with which the agent's opinion about option $k$ modulates $a_{ij}$. In the single-agent, multi-option interpretation of~\eqref{eq: general}, $S$ does not need to be odd-symmetric because there is no natural symmetry between favoring or disfavoring an option. Here, we simply assume $S$ is a shifted $\tanh$ function, i.e., $S(\cdot) = \frac{\tanh(\cdot-s)+\tanh(s)}{1-\tanh(s)^2}$, $s\in\R$.

Similarly to the original, non-modulated NOD model, the magnitude of the basal attention $u_0$ tunes the balance between the negative feedback regulation provided by the linear term in~\eqref{eq: general} and the positive feedback amplification provided by the saturated networked term. For sufficiently large basal attention, positive feedback dominates, which destabilizes the neutral state ${\bf x}={\bf 0}$ in an opinion-forming bifurcation that organizes the fast-and-flexible decision-making behavior described by NOD.


As we shall prove in Section~\ref{sec: analysis}, model~\eqref{eq: general} is tractable because  modulatory interactions do not affect {\it i)} the location of the bifurcation point in which the neutral state ${\bf x}={\bf 0}$ loses stability, i.e., the opinion-forming bifurcation thoroughly analyzed in~\cite{bizyaeva2022nonlinear,leonard2024fast,bizyaeva2023multi}, and {\it ii)} the kernel of the Jacobian of model~\eqref{eq: general} at bifurcation, which determines (to leading order) the opinion pattern observed once indecision is broken and the direction in the input space to which the system is most sensitive close to bifurcation. On the other hand, modulatory interactions affect the shape of the opinion-forming bifurcation branches. This non-local effect is hard to characterize in full generality but it can be analyzed on a case-by-case basis using Lyapunov-Schmidt reduction and singularity theory methods~\cite{Golubitsky1985}.


\subsection{Extensions and generalizations}
\label{sec: ext and gen}

Similarly to the original model~\cite{bizyaeva2022nonlinear,leonard2024fast}, we can extend~\eqref{eq: general} to the $N_a$-agent, $N_o$-option case.
This extension requires a six-index modulatory interaction matrix $M=[m_{ikk'}^{jll'}]$, where $m_{ikk'}^{jll'}$ is the effect of the opinion of agent $k'$ about option $l'$ on the additive interaction weight with which the opinion of agent $k$ about option $l$ affects the opinion of agent $i$ about option $j$.
As another generalization, replacing the modulatory term $m_{ijk} x_k^n$ by a general polynomial in $x_k$ would lead to a variant of~\eqref{eq: general} reminiscent of polynomial network dynamics on hypergraphs~\cite{pickard2023kronecker}


\section{An illustrative example}
\label{sec: bidimensional}

For two agents (or two options), model~\eqref{eq: general vector} reduces to
\begin{subequations}
\label{eq: bidimensional}
\begin{align}
    \tau \dot{x}_1 &= -x_1 + b_1 + S\Big( ( u_0 + m_{111}x_1^n + m_{112}x_2^n) a_{11} x_1+ \nonumber\\
    & + (u_0 + m_{121}x_1^n + m_{122}x_2^n) a_{12} x_2) \Big )\\
    \tau \dot{x}_2 &= -x_2 + b_2 + S\Big( ( u_0 + m_{211}x_1^n + m_{212}x_2^n) a_{21} x_1+ \nonumber\\
    & + (u_0 + m_{221}x_1^n + m_{222}x_2^n) a_{22} x_2) \Big )
\end{align}
\end{subequations}
For the modulated interaction network sketched in Figure~\ref{fig: toy}A, the additive and modulatory interaction matrices read
\begin{equation}\label{eq: bidimensional matrices}
    A=\begin{pmatrix}
    0 & -1 \\
    -1 & 0
\end{pmatrix},\ M_{\cdot\,\cdot\, 1}=\begin{pmatrix}
    0 & 0\\
    1 & 0
\end{pmatrix},\ M_{\cdot\,\cdot\, 2}=\begin{pmatrix}
    0 & 0\\
    0 & 0
\end{pmatrix}.
\end{equation}

\begin{figure}
\vspace{7pt}
    \centering
    \includegraphics[width=0.4\textwidth]{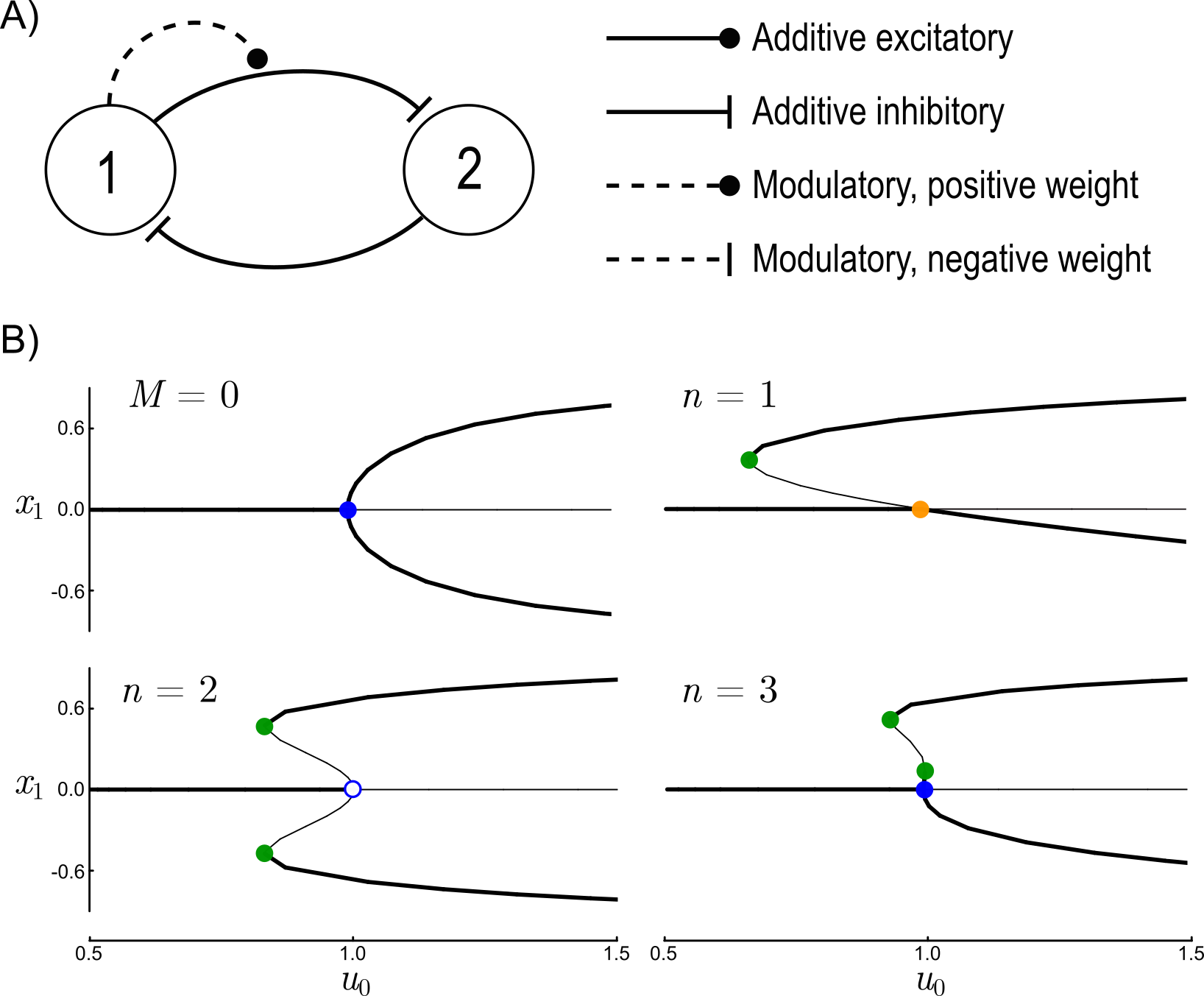}
    \caption{A) Arrows conventions for additive and modulatory interactions (we will use this convention in Figure~\ref{fig: secondary}) and an example of a modulated interaction network for a two-agent, two-options decision-making dynamics~\eqref{eq: bidimensional}. 
    B) Effect of modulatory interactions of various orders on the bifurcation behavior of~\eqref{eq: bidimensional} with additive and modulatory interactions as in A). Stable (unstable) bifurcation branches are indicated by thick (thin) lines. Supercritical pitchfork bifurcations are indicated by blue dots, subcritical pitchfork bifurcations by blue circles,  saddle-node bifurcations by green dots, transcritical bifurcations by orange dots.
    }
    \label{fig: toy}
\end{figure}

Figure~\ref{fig: toy}B shows the bifurcation behavior of model~\eqref{eq: bidimensional} for $b_1=b_2=0$, without modulatory interactions ($M=0$) and with modulatory interactions of different orders ($n=1,2,3$), for additive and modulatory interaction matrices defined in~\eqref{eq: bidimensional matrices}. In all cases, the {\it neutral state} $x_1=x_2=0$ loses stability in a bifurcation at $u_0=1$.

When $M=0$, the model exhibits the symmetric {\it indecision-breaking} or {\it opinion-forming} supercritical {\it pitchfork} bifurcation thoroughly studied  in~\cite{bizyaeva2022nonlinear,leonard2024fast}. For $b_1=b_2=0$, the existence of this symmetric bifurcation arises from the symmetry of the system with respect to swapping agents and swapping options. Agent symmetry is enforced by a symmetric adjacency matrix. Option symmetry is enforced by the odd-symmetry of the vector field, as ensured by using the odd sigmoidal function $S(\cdot)=\tanh(\cdot)$.

When $n=1$, any increase in $x_1$ makes the modulated inhibitory weight $\tilde a_{21}=(u_0+m_{211} x_1)a_{21}$ more negative. That is, the larger $x_1$ is the more it inhibits $x_2$ because $\frac{\partial \tilde a_{21}}{\partial x_1}=m_{211}a_{21}<0$. This modulatory interaction breaks the network symmetry: it favors larger $x_1$ and smaller $x_2$ as compared to the non-modulated regime. Formally, such a modulatory interaction breaks both agent and option symmetries by making the modulated adjacency matrix non-symmetric and the modulated vector field non-odd symmetric. This is reflected in the resulting bifurcation diagram: the pitchfork unfolds into a transcritical bifurcation characterized by a region of bistability between the neutral state and an opinionated state ${\bf x}^*$ characterized by $x_1^*>0>x_2^*$ (the state $x_2$ is not shown in the bifurcation diagrams).

When $n=2$, the modulated inhibitory weight $\tilde a_{21}=(u_0+m_{211} x_1^2)a_{21}$ is non-monotone in $x_1$. If $x_1>0$ ($x_1<0$), an increase in $x_1$ makes $\tilde a_{21}$ more (less) negative because $\frac{\partial \tilde a_{21}}{\partial x_1}=2m_{211}a_{21}x_1<0$ ($>0$). 
This modulatory interaction amplifies mutual inhibition but does {\it not} break network symmetry: it favors larger $x_1$ and smaller $x_2$ for positive $x_1$ (and negative $x_2$) but smaller $x_1$ and larger $x_2$ for negative $x_1$ (and positive $x_2$). The even order of the modulatory interaction preserves the symmetry of the original system in the network equivariant sense of~\cite{golubitsky2003symmetry}.
This is reflected in the resulting bifurcation diagram: the symmetric pitchfork becomes subcritical and is characterized by a region of bistability between the neutral state coexists and two symmetric opinionated states ${\bf x}_{up}^*$ and ${\bf x}_{down}^*$, such that $({\bf x}_{up})_1^*>0>({\bf x}_{up})_2^*$ and $({\bf x}_{down})_1^*<0<({\bf x}_{down})_2^*$.

When $n=3$, the modulated inhibitory weight $\tilde a_{21}=(u_0+m_{211} x_1^3)a_{21}$ is monotone in $x_1$, similar to the case $n=1$, because $\frac{\partial \tilde a_{21}}{\partial x_1}=3m_{211}a_{21}x_1^2\leq 0$. 
As in the case $n=1$, this modulatory interaction breaks both agent and option symmetry and there exists a region of bistability between the neutral state and an opinionated state ${\bf x}^*$ characterized by $x_1^*>0>x_2^*$. However,
for the same modulatory strength $m_{211}$, a cubic modulatory interaction locally preserves the supercritical pitchfork of the non-modulated case, while a linear modulatory interaction unfolds it into a transcritical bifurcation. 


\section{Opinion-forming bifurcations in the presence of modulatory interactions}
\label{sec: analysis}

We start by proving that modulatory interactions do {\it not} change neither the location of the opinion-forming bifurcation nor the associated critical (the Jacobian kernel at bifurcation) and sensitive subspaces (the direction in input space that is amplified nonlinearly along the critical subspace). We then state and interpret our main result that characterizes opinion-forming bifurcation in modulated NOD.

Start by observing that for ${\bf b}={\bf 0}$ the neutral state ${\bf x}={\bf 0}$ is an equilibrium of~\eqref{eq: general} for all $u_0\in\R$.
Let $p_i({\bf x}) = \sum_{j=1}^N a_{ij} \left( u_0 + \sum_{k=1}^N m_{ijk} x_k^n \right) x_j$.
Then~\eqref{eq: general} becomes
$\tau\dot x_i = -x_i+b_i+S(p_i({\bf x})).$
It follows that the Jacobian $J({\bf x})$ of~\eqref{eq: general} at ${\bf x}$ has components
\begin{equation}
    J_{il}({\bf x}) = \left\{ \begin{array}{ll}
        S'\big(p_i({\bf x})\big) \partial p_i/\partial x_l({\bf x}) - 1,& \quad \text{if $l=i$} \\
        S'\big(p_i({\bf x})\big) \partial p_i/\partial x_l({\bf x}), &\quad \text{if $l \neq i$} 
    \end{array}\right.
    \label{eq: jacobian}
\end{equation}
with
\begin{equation}
    \dfrac{\partial p_i}{\partial x_l}({\bf x})\! =\! a_{il}u_0 \!+\! \sum_{k=1}^N a_{il}m_{ilk} x_k^n \!+\! n \sum_{j=1}^N a_{ij} m_{ijl} x_l^{n-1} x_j \, .
    \label{eq: dpdx}
\end{equation}
We have the following evident but key lemma.
\begin{lemma}\label{lem: jacob origin}
    The Jacobian $J({\bf 0})$ of~\eqref{eq: general} at ${\bf x} = \mathbf{0}$ does not depend on the modulatory interaction weights $m_{ijk}$. In particular $J({\bf 0})=-I+u_0S'(0)A$.
    \end{lemma}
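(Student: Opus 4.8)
The plan is a direct evaluation of the Jacobian formula~\eqref{eq: jacobian} at the neutral state. First I would compute the argument of the saturation function at ${\bf x}={\bf 0}$: since every summand of $p_i({\bf x})$ carries an explicit factor $x_j$, we immediately get $p_i({\bf 0})=0$ for all $i$, independently of the $m_{ijk}$. Hence $S'\big(p_i({\bf 0})\big)=S'(0)$, which involves no modulatory weight.

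Next I would evaluate $\partial p_i/\partial x_l({\bf 0})$ using the explicit expression~\eqref{eq: dpdx}, which is a sum of three terms. The first term, $a_{il}u_0$, is purely additive. The second term, $\sum_{k=1}^N a_{il}m_{ilk}x_k^n$, vanishes at ${\bf x}={\bf 0}$ because $n\geq 1$, so each $x_k^n$ is zero there. The third term, $n\sum_{j=1}^N a_{ij}m_{ijl}x_l^{n-1}x_j$, also vanishes at ${\bf x}={\bf 0}$: when $n\geq 2$ because of the factor $x_l^{n-1}$, and when $n=1$ because of the remaining factor $x_j$. Thus $\partial p_i/\partial x_l({\bf 0})=a_{il}u_0$, again free of modulatory weights.

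Combining the two computations, $J_{il}({\bf 0}) = u_0 S'(0)\, a_{il} - \delta_{il}$ with $\delta_{il}$ the Kronecker delta, which in matrix form is exactly $J({\bf 0}) = -I + u_0 S'(0) A$; since no $m_{ijk}$ appears, both assertions of the lemma follow. I do not expect any real obstacle: the statement is essentially a substitution. The only point deserving a moment of care is the $n=1$ case in the third term of~\eqref{eq: dpdx}, where $x_l^{n-1}=x_l^{0}$ must be dealt with as above, together with the standing assumption $n\in\mathbb{N}_{>0}$, which is precisely what guarantees that the monomials $x_k^n$ genuinely vanish at the neutral state.
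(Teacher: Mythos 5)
Your proposal is correct and follows exactly the route the paper intends: the paper states the lemma as ``evident'' immediately after displaying~\eqref{eq: jacobian} and~\eqref{eq: dpdx}, and your direct evaluation of those formulas at ${\bf x}={\bf 0}$ (using $p_i({\bf 0})=0$ and the vanishing of both modulatory terms in $\partial p_i/\partial x_l$ thanks to $n\in\mathbb{N}_{>0}$) is precisely the omitted computation. Your explicit handling of the $n=1$ case in the third term of~\eqref{eq: dpdx} is a welcome bit of care that the paper leaves implicit.
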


The following theorem generalizes~\cite[Theorem~4.2]{bizyaeva2023multi} to model~\eqref{eq: general},\eqref{eq: general vector}. Let $\sigma(A)$ denote the spectrum of $A$.
\begin{theorem}\label{thm: main bifurcation}
    Consider model~\eqref{eq: general vector}. Suppose that $A$ has a strictly leading eigenvalue $\lambda_{max}$, i.e., a simple real eigenvalue satisfying $\lambda_{max}>\max_{\lambda_i\in\sigma(A)\setminus\lambda_{max}}\Re(\lambda_i)$. Let ${\bf v}_{max}$ and ${\bf w}_{max}$ be the right and left eigenvectors associated to $\lambda_{max}$, respectively. Let $u_0^*=(S'(0)\lambda_{max})^{-1}$. Then:\\
    1. (Indecision-breaking bifurcation and critical subspace) For ${\bf b}={\bf 0}$, ${\bf x}={\bf 0}$ is exponentially stable for $u_0<u_0^*$, undergoes a bifurcation at $u_0=u_0^*$, and is unstable for $u_0>u_0^*$. Furthermore, bifurcation branches emanating from $({\bf 0},u_0^*)$ are tangent to ${\bf v}_{max}$.\\
    2. (Sensitivity subspace) If $\langle {\bf b},{\bf w}_{max}\rangle=0$ and $\|{\bf b}\|$ is small enough, there exists a neighborhood $U\ni u_0^*$ such that for all $u_0\in U$ there exists an equilibrium ${\bf x}={\bf x}^*(u_0)$ satisfying $\langle {\bf x}^*(u_0), {\bf v}_{max} \rangle=0$ such that ${\bf x}^*(u_0)$ is stable (unstable) for $u_0<u_0^*$ ($u_0>u_0^*$) and undergoes a bifurcation at $u_0=u_0^*$. If $\langle {\bf b},{\bf w}_{max}\rangle\neq 0$ and $\|{\bf b}\|$ is small enough, then the bifurcation unfolds according to its {\it universal unfolding} (see~\cite[Chapter~4]{Golubitsky1985}).
\end{theorem}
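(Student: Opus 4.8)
The plan is to reduce both statements to the linearization at the origin and then invoke Lyapunov--Schmidt reduction and singularity theory, following the proof of~\cite[Theorem~4.2]{bizyaeva2023multi} and checking at each step that the modulatory terms --- which enter the polynomials $p_i({\bf x})$ only at degree $n+1\ge 2$, cf.~\eqref{eq: dpdx} --- never touch the ingredients that Parts~1 and~2 rely on. By Lemma~\ref{lem: jacob origin}, $J({\bf 0})=-I+u_0 S'(0)A$ is \emph{independent of the} $m_{ijk}$, so its eigenvalues are $-1+u_0 S'(0)\lambda_i$, $\lambda_i\in\sigma(A)$, with the eigenvectors of $A$. Since $\lambda_{max}$ is a simple strictly leading eigenvalue and $S'(0)>0$, the rightmost eigenvalue of $J({\bf 0})$ is $-1+u_0 S'(0)\lambda_{max}$, which increases through zero at $u_0=u_0^*$ while all other eigenvalues stay in the open left half-plane for $u_0$ near $u_0^*$. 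This gives exponential stability of ${\bf 0}$ for $u_0<u_0^*$, loss of stability at $u_0^*$, and instability for $u_0>u_0^*$; moreover $\ker J({\bf 0})|_{u_0^*}=\mathrm{span}({\bf v}_{max})$, so a center-manifold / Lyapunov--Schmidt reduction at $({\bf 0},u_0^*)$ confines every bifurcating equilibrium branch to a curve tangent to ${\bf v}_{max}$. Since only $J({\bf 0})$ enters, this is verbatim the non-modulated argument, proving Part~1.

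\textbf{Part 2, balanced input.} For $\langle{\bf b},{\bf w}_{max}\rangle=0$ I would use the Lyapunov--Schmidt splitting $\R^N=\mathrm{span}({\bf v}_{max})\oplus\mathcal R$ with $\mathcal R=\mathrm{range}\,J({\bf 0})|_{u_0^*}$, whose annihilator is $\mathrm{span}({\bf w}_{max})$. The hypothesis says precisely that ${\bf b}\in\mathcal R$, so the range component of the equilibrium equation can be solved by the implicit function theorem for $\|{\bf b}\|$ small and $u_0$ near $u_0^*$, yielding a branch ${\bf x}^*(u_0)$ that is $O(\|{\bf b}\|)$, has vanishing critical component (to leading order it is $(I-u_0 S'(0)A)^{-1}{\bf b}$, which carries no ${\bf v}_{max}$ term exactly because $\langle{\bf b},{\bf w}_{max}\rangle=0$), and for which the scalar reduced equation retains its trivial solution, so that the change of stability is pinned to $u_0=u_0^*$. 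The Jacobian along this branch is an $O(\|{\bf b}\|)$ perturbation of $J({\bf 0})$, hence the eigenvalue crossing zero is the one tracked in Part~1; therefore ${\bf x}^*(u_0)$ is stable for $u_0<u_0^*$, unstable for $u_0>u_0^*$, and bifurcates at $u_0^*$. Here too the modulatory terms only affect the higher-order part of the reduced equation.

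\textbf{Part 2, generic input.} For $\alpha:=\langle{\bf b},{\bf w}_{max}\rangle\neq 0$, the scalar reduced bifurcation equation takes the form $g(v,\,u_0-u_0^*;\,\alpha)=0$ in which $\alpha$ appears as a linear term with nonzero coefficient, since $\partial g/\partial\alpha$ at $({\bf 0},u_0^*)$ equals $\langle{\bf b}/\alpha,{\bf w}_{max}\rangle=1\neq 0$. By the Golubitsky--Schaeffer theory~\cite[Chapter~4]{Golubitsky1985}, small $\alpha$ therefore unfolds the organizing singularity of $g$ at $({\bf 0},u_0^*)$ versally, i.e. according to its universal unfolding. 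Which singularity this is now \emph{does} depend on $n$ and on the $m_{ijk}$ through the higher-order Taylor coefficients of $g$, consistently with the examples of Section~\ref{sec: bidimensional}.

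\textbf{Main obstacle.} The location $u_0^*$ and the tangency to ${\bf v}_{max}$ fall out of Lemma~\ref{lem: jacob origin} with essentially no work; the substantive step is the Lyapunov--Schmidt reduction underlying Part~2 --- showing the deflected branch lies in the sensitivity subspace transverse to ${\bf v}_{max}$, that its bifurcation stays pinned to $u_0^*$ in the balanced case, and verifying the nondegeneracy that makes $\alpha$ a versal unfolding parameter in the generic case. These are, however, the very computations already performed in~\cite[Theorem~4.2]{bizyaeva2023multi}; the single genuinely new ingredient is the observation from Lemma~\ref{lem: jacob origin} and~\eqref{eq: dpdx} that the modulatory nonlinearity enters $p_i$, and hence $g$, only at polynomial order $n+1\ge 2$, so it leaves every assertion of Parts~1--2 intact --- which is exactly what makes the generalization essentially free.
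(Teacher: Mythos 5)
Your proposal is correct and follows essentially the same route as the paper's proof: Lemma~\ref{lem: jacob origin} reduces Part~1 to the spectrum of $-I+u_0S'(0)A$ (Lyapunov's indirect method plus a center-manifold/Lyapunov--Schmidt argument for the tangency to ${\bf v}_{max}$), and Part~2 to a Lyapunov--Schmidt reduction along ${\bf v}_{max},{\bf w}_{max}$ in which $\langle{\bf b},{\bf w}_{max}\rangle$ either drops out of the reduced equation or acts as a versal unfolding parameter. The only cosmetic difference is that the paper phrases the splitting via a Jordan-form change of basis $U$ while you use the equivalent decomposition $\mathrm{span}({\bf v}_{max})\oplus\mathrm{range}\,J({\bf 0})$.
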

\begin{proof}
    Observe that $J({\bf 0})$ has a simple leading eigenvalue $-1+u_0S'(0)\lambda_{max}$. Let $U$ the matrix that put $J({\bf 0})$ in the Jordan form $$U^{-1}J({\bf 0})U=\begin{pmatrix}
		-1+u0S'(0)\lambda_{max} & {\bf 0}_{1\times N-1}\\
		{\bf 0}_{N-1} & \tilde J({\bf 0})
	\end{pmatrix}.$$ Observe that $U_{\cdot 1}={\bf v}_{max}$ and $U^{-1}_{1\cdot}=({\bf w}_{max})^\top$. Furthermore, all the $N-1$ eigenvalues of $\tilde J({\bf 0})$ have negative real part for $u_0$ sufficiently close to $u_0^*$. Then the first statement follow from Lyapunov's indirect method~\cite[Theorem~4.7]{Khalil2002} and the Center Manifold Theorem~\cite[Theorem~3.2.1]{guckenheimer2013nonlinear}. The second statement follows by applying the Lyapunov-Schmidt reduction~\cite[Section~I.3]{Golubitsky1985} at $({\bf x},u_0)=({\bf 0},u_0^*)$ with respect to right and left singular directions ${\bf v}_{max},{\bf w}_{max}$ and noticing that if $\langle {\bf b},{\bf w}_{max}\rangle=0$ the reduced dynamics along ${\bf v}_{max}$ does not depend on ${\bf b}$, whereas for $\langle {\bf b},{\bf w}_{max}\rangle<0$ the branches predicted by the Lyapunov-Schmidt reduction are predicted by applying unfolding theory~\cite[Chapter~4]{Golubitsky1985} on the resulting scalar equilibrium equation.
\end{proof}

It follows from Theorem~\ref{thm: main bifurcation} that the critical attention value $u_0^*$ is independent of modulatory interactions. It solely depends on the leading eigenstructure of $A$. Theorem~\ref{thm: main bifurcation} also implies that the opinion patterns ${\bf x}_{bif}$ along the opinionated bifurcation branches are solely determined by the right leading eigenvector ${\bf v}_{max}$, i.e., to leading order, ${\bf x}_{bif} = \bar x_{bif} {\bf v}_{max}$, $\bar x_{bif}\in\R$. 
Finally, input sensitivity at bifurcation is also independent of modulatory interactions, as it is determined by the left leading eigenvector ${\bf w}_{max}$.



\section{Shaping global indecision-breaking bifurcation through modulatory interactions}
\label{sec: shaping hierarchical}

In the next section we apply Theorem~\ref{thm: main bifurcation} and illustrate first in a multi-agent, two-option network and then in a single-agent, multi-option network how to analyze and predict the effect of modulatory interactions on the {\it shape} of indecision-breaking bifurcation branches along ${\bf v}_{max}$.

\subsection{Modulated indecision-breaking in a multi-agent, two-option social influence network}

Consider the 5-agent, 2-options, modulated opinion interaction network in Figure~\ref{fig: ring}A. Let all additive links have unitary weight, all modulatory links have weight $\bar m\geq0$, and the order of the modulatory interactions be $n = 1$. We interpret this network as a social network with first-neighbor additive coupling and the presence of an ``influencer'' node (node 1) that affects the network discourse by modulating all additive coupling weights as a function of its state.

In model~\eqref{eq: general vector} the additive interaction matrix $A$ associated to Figure~\ref{fig: ring}A is the adjacency matrix of an undirected ring graph, i.e., the circulant matrix generated by the vector $[0,1,0,0,1]$.
Additionally, because agent 1 is the only modulator and it is modulating all interactions, the modulatory interaction matrix $M$ satisfies $M_{\cdot \cdot 1} = \bar{m}A,\ \bar{m} \in \R$, and 0 elsewhere. Invoking Theorem~\ref{thm: main bifurcation}, there is an opinion-forming bifurcation at $u_0=u_0^*=(S'(0)\lambda_{max})^{-1}$, where $\lambda_{max}=2$ is the largest eigenvalue of $A$. For $S()=\tanh()$, $u_0^*=\tfrac{1}{2}$. $\lambda_{max}=2$ is simple and with right and left eingenvectors proportional to $\mathbf{v}_{max}=\mathbf{w}_{max} = [1,1,1,1,1]^T$. The opinionated bifurcation branches emerging at the opinion-forming bifurcation are tangent to $\mathbf{v}_{max}$, that is, to leading order, they correspond to consensus opinion formation. Similarly, only inputs such that $\langle\mathbf{b},\mathbf{w}_{max}\rangle=\sum_{i=1}^5b_i\neq0$ affect the opinion forming behavior. To focus on the role of the influencer node, we assume ${\bf b}={\bf 0}$ in what follows.

To characterize how the influencer modulation shapes opinionated branches we use the Lyapunov-Schmidt (LS) reduction~\cite[Section~I.3]{Golubitsky1985} with right and left singular eigenvectors $\mathbf{v}_{max},\mathbf{w}_{max}$. Let $v$ be the reduced variable along $\mathbf{v}_{max}$ and $g(v,u_0)$ be the LS reduced equation. Simple computations shows that, at $(v, u_0) = (0, \tfrac{1}{2})$:
\begin{align*}
    g &= \tfrac{\partial g}{\partial v} = \tfrac{\partial g}{\partial u_0} = \tfrac{\partial^2 g}{\partial u_0^2} = 0,\\ 
    \tfrac{\partial  }{\partial v}\tfrac{\partial g}{\partial u_0} &= \tfrac{\partial  }{\partial u_0}\tfrac{\partial g}{\partial v} = 3,\quad
    \tfrac{\partial^2 g}{\partial v^2} = 4\bar m,\quad
    \tfrac{\partial^3 g}{\partial v^3} = -2.
\end{align*}
The recognition problem for the pitchfork~\cite[Prop. 9.2]{Golubitsky1985} then implies that for $\bar m=0$ the opinion forming bifurcation is a supercritical pitchfork (Figure~\ref{fig: ring}B).
When $\bar m>0$, the influencer acts in favor of option 1 and even in the absence of inputs the pitchfork unfolds into a transcritical bifurcation (~\cite[Section~III.7]{Golubitsky1985}). Similarly to Figure~\ref{fig: toy}B ($n=1$), the influencer node modulation of the network discourse creates a pre-bifurcation bistable region in which the group can switch from neutral to option 1 even in the absence of inputs.



\begin{figure}
\vspace{7pt}
    \centering
    \includegraphics[width = 0.4\textwidth]{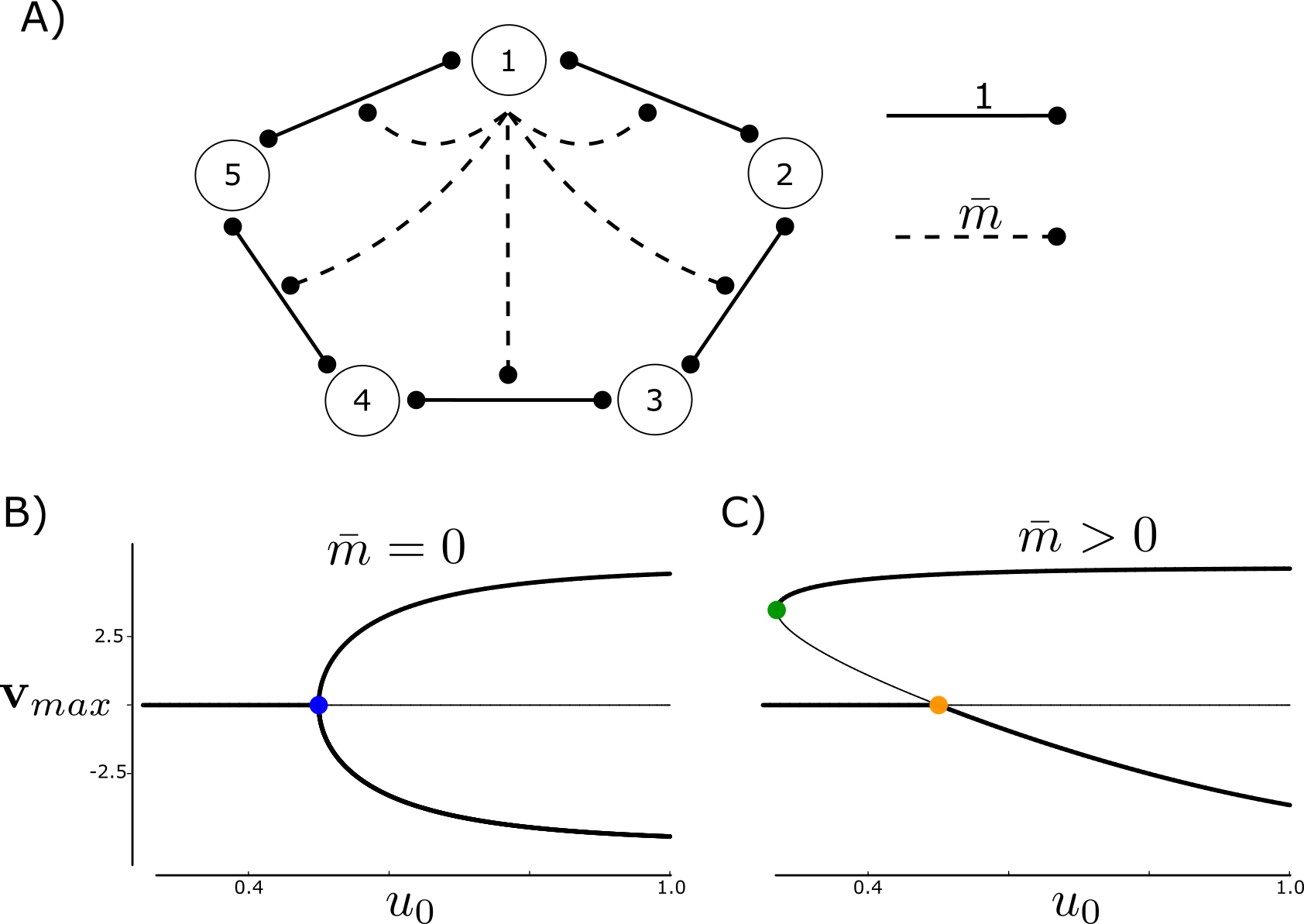}
    \caption{ A) The modulated opinion interaction network under study. B) Bifurcation diagram in the absence of inputs and $m=0$. The model undergoes a supercritical pitchfork at $u_0=\tfrac{1}{2}$ along the consensus subspace generated by ${\bf v}_{max}$. C) Bifurcation diagram for $m = 0.5$. The pitchfork unfolds into a transcritical bifurcation (yellow dot). The associated bistability region reflects the effects of the influencer node in shaping the network discourse toward option 1.}
    \label{fig: ring}
\end{figure}

\subsection{Modulated conditional decision-making in a single-agent multi-option network for robot navigation}
\label{sec: hierarchical decisions}

The single-agent, multi-option, modulated opinion interaction network in Figure~\ref{fig: secondary}A provides an extension of the NOD-based robot navigation controller introduced in~\cite{cathcart2023proactive}. It consists of two mutual inhibitory NOD subnetworks made by nodes $\{1,2\}$, `drive or stay' , and $\{3,4\}$, `steer left or steer right', respectively. The two subnetworks are disconnected at the additive interaction level but node 1 of subnetwork $\{1,2\}$ modulates the mutual inhibition strength of subnetwork $\{3,4\}$ with modulatory weight $\bar m$. The analysis in the remainder of the section is illustrated by Figure~\ref{fig: secondary}B.

The mutual inhibition strength of subnetwork $\{1,2\}$ is $\alpha$. Invoking Theorem~\ref{thm: main bifurcation}, for $u_0<\alpha^{-1}$ the neutral state $x_1,x_2=0$ (idle state) is stable. For $u_0>\alpha^{-1}$ the neutral state becomes unstable in a pitchfork bifurcation giving rise to two opinionated stable equilibria characterized by $x_1>0>x_2$ (`drive') and $x_1<0<x_2$ (`stay'), respectively.

The mutual inhibition strength of subnetwork $\{3,4\}$ is $\beta+\bar m x_1$, so in this subnetwork the opinion-forming pitchfork bifurcation happens for $u_0=(\beta+\bar m x_1)^{-1}$. Hence, if $\bar m>0$, the more positive (negative) $x_1$ is, the smaller (larger) the basal attention $u_0$ needed for the opinion-forming pitchfork of subnetwork $\{3,4\}$ to happen is. For $u_0<(\beta+\bar m x_1)^{-1}$ the neutral state $x_3,x_4=0$ (`no steering') is stable. For $u_0>(\beta+\bar m x_1)^{-1}$ the neutral state is unstable and two stable opinionated equilibria appear, characterized by $x_3>0>x_4$ (`steer left') and $x_3<0<x_4$ (`steer right'), respectively.

In the network model of Figure~\ref{fig: secondary}A it is natural to assume $\alpha>\beta$ in such a way that, as attention increases, the decision to drive (or stay) happens before the decision to steer. If $\bar m=0$, the steering pitchfork of subnetwork $\{3,4\}$ happens for the same basal attention value $u_0=\beta^{-1}$, independently of the decision state of the drive-or-stay subnetwork $\{1,2\}$. Conversely, for $\bar m>0$ and recalling that $x_1>0$ ($x_1<0$) along the driving (staying) branch, the critical attention value to trigger a steering decision is lower (higher) when subnetwork $\{1,2\}$ is in the drive (stay) state. This ensures faster and more sensitive steering decisions when driving as compared to staying, a desirable property for performing and efficient navigation.

The modulation of the basal attention needed to trigger a decision as a function of another decision outcome can be understood as a form of soft or flexible conditional decision-making: the sensitivity of a subordinate decision (here steering) is conditioned to the outcome of a primary decision (here drive-or-stay).
The continuous state and parameter nature of  modulated NOD, and the organizing role of its bifurcations, makes this kind of soft conditional decision-making tunable and adaptable according to the principles of fast-and-flexible decision-making~\cite{leonard2024fast}.

\begin{figure}
\vspace{7pt}
    \centering
    \includegraphics[width = 0.43\textwidth]{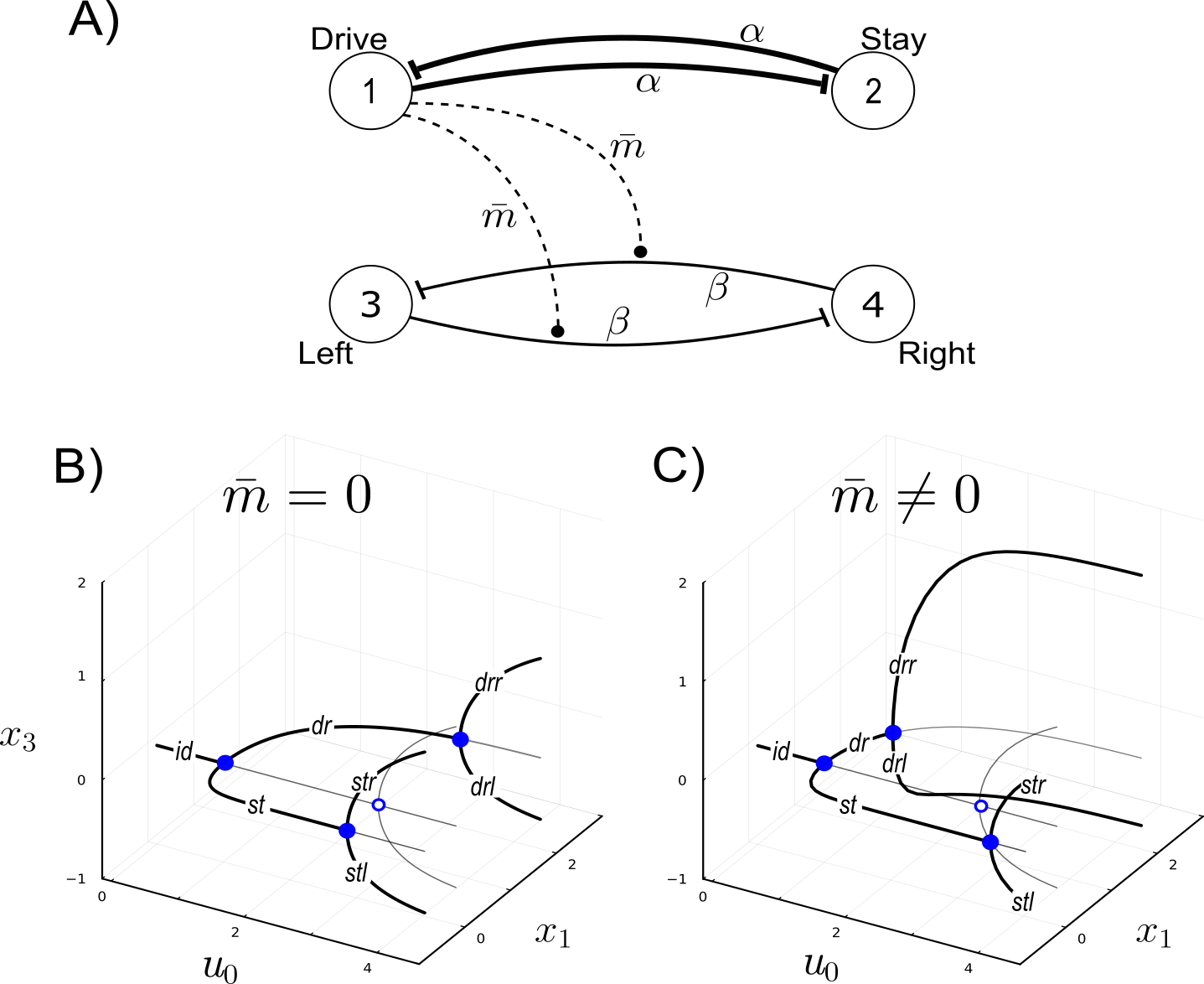}
    \caption{A) The modulated opinion interaction network under study. B) Bifurcation diagram for $\alpha=1,\beta=0.3,\bar m=0$. Branches are labelled with the associated robot navigation commands ($\mathsf{id}$: idle; $\mathsf{dr}$: drive; $\mathsf{st}$: stay; $\mathsf{drr}$: drive+steer right; $\mathsf{drl}$: drive+steer left; $\mathsf{str}$: stay+steer right; $\mathsf{stl}$: stay+steer left)  C) Same as B but for $\bar m=2$. In the presence of modulatory interactions, the drive+steer bifurcation ($\mathsf{drr}$,$\mathsf{drl}$ branches) happens for lower $u_0$ than the stay+steer bifurcation ($\mathsf{str}$,$\mathsf{stl}$ branches).}
    \label{fig: secondary}
\end{figure}

\section{Conclusions}

We introduced modulatory, that is, multiplicative, interactions in nonlinear opinion dynamics (NOD) of fast-and-flexible decision-making, characterized their effects on opinion-forming behaviors, and illustrated their use for the design of fast-and-flexible {\it conditional} decision-making. The proposed model is inspired by widespread evidence of the role of modulatory interactions in biological (neural, molecular) and social decision-making networks. The chosen parametrization makes the model tractable and we showed how to rigorously characterize the modulated NOD opinion-forming bifurcation behavior. In particular, the possible opinion patterns emerging at a modulated opinion-forming bifurcation can be characterized in full generality and coincide with the non-modulated case. Similarly for the input patterns that are amplified at the opinion-forming bifurcation. However, the shape of the modulated opinion-forming bifurcation branches, and therefore the resulting opinion-forming behaviors, change markedly in the presence of modulatory interactions. We showed how the Lyapunov-Schmidt reduction method can be used to predict this shaping effect in a social influence network. Finally, we illustrated how modulatory interactions can be used to augment a NOD model for robot navigation to exhibit conditional decision-making in which a steering decision is conditioned to a drive-or-stay decision. To summarize, the proposed modulated NOD retain the mathematical tractability but also largely enriches the behavior and the possible applications of the original NOD .

\bibliographystyle{IEEEtran}
\bibliography{biblio.bib}

\end{document}